\newtheorem{theorem}{Theorem}
\theoremstyle{plain}
\newtheorem{case}{Case}
\newtheorem{claim}{Claim}
\newtheorem{definition}{Definition}
\newtheorem{example}{Example}
\newtheorem{note}{Note}
\newtheorem{remark}{Remark}
\numberwithin{equation}{section}
\begin{document}
\title[\hfil Note on Extensions of The Beta function]
{Note on Extensions of The Beta function}

\author{Mehar Chand}
\address{Department of Applied Sciences, Guru Kashi University, Bathinda-1513002 (India)}
\email{mehar.jallandhra@gmail.com}

\subjclass[2010]{26A33, 33C45, 33C60, 33C70}
\keywords{Pochhemmer symbol; gamma function;  Beta function; hypergeometric function}



\begin{abstract}The classical beta function $B(x,y)$ is one of the most fundamental special functions, due to its important role in various fields in the mathematical, physical, engineering and statistical sciences. Useful extensions of the classical Beta function  has been considered by many authors. In the present paper, our main objective is to study the convergence of extensions of classical beta function and introduce modified extension of classical beta function. It is interpreted numerically and geometrically in the view of convergence, further properties and integral presentations are established.\end{abstract}

\maketitle

\section{Introduction and Preliminaries}

In many areas of applied mathematics, various types of special functions become essential tools for scientists and engineers. The (Euler's) classical beta function $B(x,y)$ is one of the most fundamental special functions, because of its important role in various fields in the mathematical, physical, engineering and statistical sciences. During last four decades or so, several interesting and useful extensions of  the familiar special functions (such as the Gamma and Beta functions, the Gauss hypergeometric function, and so on) have been considered
by many authors. In the present work, we are concerned with various generalizations of the classical beta function, which can be found in the literature (see \cite{Srivastava-etinkaya-K?ymaz-2014,Parmar-2013,Ozergin-Ozerslan-Altin-2011,Chaudhry-Qadir-Rafique-Zubair-1997}).
\newline

 Recently the generalized beta function $B_p^{\delta, \zeta; \kappa, \mu}(\alpha,\beta)$ is introduced by Srivastava et al., which is the most generalized extension of classical beta function and  is defined as(see\cite{Srivastava-etinkaya-K?ymaz-2014}):

\begin{eqnarray}\label{B1} \aligned &
B_{p}^{(\delta,\zeta; \kappa, \mu)}(\alpha,\beta)\\&
=\int_{0}^{1}t^{\alpha-1}(1-t)^{\beta-1} {}_{1}F_{1}\left(\delta; \zeta; -\frac{p}{t^{\kappa}(1-t)^{\mu}} \right)dt,\endaligned
\end{eqnarray}

where $\Re(p)\geq 0; \min[\Re(\alpha), \Re(\beta), \Re(\delta), \Re(\zeta)]>0; \min[\Re(\kappa), \Re(\mu)]>0$ and ${}_1F_1(.)$ is confluent hypergeometric function, which is special case of the well known generalized hypergeometric series $_pF_q(.)$.
\newline

 The generalized hypergeometric series ${}_p F_q(p, q \in \mathbb{N})$ is defined as (see \cite[p.73]{Rainville-1971}) and \cite[pp. 71-75]{Srivastava-Choi-2012}:

\begin{eqnarray}\label{pFq} \aligned &  {}_pF_q \left[
\begin{array}{cc}\alpha_1,\ldots,\alpha_p; \\ \beta_1,\ldots,\beta_q;\end{array}z\right]=\sum_{n=0}^\infty \, \frac{(\alpha_1)_n \cdots (\alpha_p)_n}{ (\beta_1)_n \cdots (\beta_q)_n} \frac{z^n}{n!}\\&\hskip 4mm={}_pF_q (\alpha_1,\,\ldots,\,\alpha_p ;\, \beta_1,\,\ldots,\,\beta_q;\,z),\endaligned \end{eqnarray}

where $(\lambda)_n$ is the Pochhammer symbol defined (for $\lambda \in \mathbb C$) by (see\cite[p.2 and p.5]{Srivastava-Choi-2012}):

\begin{eqnarray}\label{PS} \aligned & (\lambda)_n :=\left\{\begin{array}{cc} 1 \hskip 38 mm (n=0) \\ \lambda (\lambda +1) \ldots (\lambda+n-1) \quad (n \in {\mathbb N})
\end{array}\right. \endaligned \end{eqnarray}
\begin{eqnarray} \label{PS1}\aligned &
= \frac{\Gamma (\lambda +n)}{ \Gamma (\lambda)}
      \quad (\lambda \in {\mathbb C} \setminus {\mathbb Z}_0^-),\endaligned \end{eqnarray}

and ${\mathbb Z}_0^-$ denotes the set of Non-positive integers and $\Gamma(\lambda)$ is familiar Gamma function.
\newline

When $\kappa=\mu$, \eqref{B1} reduces to the generalized extended beta function $B_{p}^{(\delta, \zeta; \mu)}(\alpha,\beta)$ defined by (see \cite[p. 37]{Parmar-2013})
\begin{eqnarray}\label{B2} \aligned &
B_{p}^{(\delta,\zeta; \mu)}(\alpha,\beta)\\&
=\int_{0}^{1}t^{\alpha-1}(1-t)^{\beta-1} {}_{1}F_{1}\left(\delta; \zeta; -\frac{p}{t^{\mu}(1-t)^{\mu}} \right)dt,\endaligned
\end{eqnarray}
where $\Re(p)\geq 0; \min[\Re(\alpha), \Re(\beta), \Re(\delta), \Re(\zeta)]>0; \Re(\mu)>0$.
\newline

The special case of \eqref{B2}, when $\mu=1$ reduces to the generalized beta type function as follows (see\cite[p.4602]{Ozergin-Ozerslan-Altin-2011})

\begin{eqnarray}\label{B3} \aligned &
B_{p}^{(\delta,\zeta)}(\alpha,\beta)=B_{p}^{(\delta,\zeta; 1)}(\alpha,\beta)\\&
=\int_{0}^{1}t^{\alpha-1}(1-t)^{\beta-1} {}_{1}F_{1}\left(\delta; \zeta; -\frac{p}{t(1-t)} \right)dt,\endaligned
\end{eqnarray}
where $\Re(p)\geq 0; \min[\Re(\alpha), \Re(\beta), \Re(\delta), \Re(\zeta)]>0$.
\newline

The further special case of \eqref{B3} when $\delta=\zeta$ is given due to Choudhary et. al.\cite{Chaudhry-Qadir-Rafique-Zubair-1997} by

\begin{eqnarray}\label{ex-beta} \aligned &
B_{p}(\alpha,\beta)=B(\alpha,\beta;p)=B_{p}^{(\delta,\delta)}(\alpha,\beta)\\&\hskip 2mm
=\int_{0}^{1}t^{\alpha-1}(1-t)^{\beta-1} \exp\left(-\frac{p}{t(1-t)} \right)dt, \\&\hskip 35mm (\Re(p)\geq 0).\endaligned
\end{eqnarray}

When we choose $p=0$, all the above extensions reduces to the classical beta function $B(x, y)$, which is defined by

\begin{eqnarray}\label{B5} \aligned &
B(\alpha,\beta)=\int_{0}^{1}t^{\alpha-1}(1-t)^{\beta-1}dt,\\& \hskip 30mm (\Re(\alpha)>0, \Re(\beta)>0).\endaligned \end{eqnarray}

It is clear to see the following relationship between the classical beta function
$B(\alpha,\beta)$ and its extensions:

\begin{eqnarray}\aligned & B(\alpha,\beta) = B_0(\alpha,\beta) = B_0^{(\delta,\zeta)}(\alpha,\beta)\\&\hskip 20mm= B_0^{(\delta,\zeta;1)}(\alpha,\beta)= B_0^{(\delta,\zeta;1,1)}(\alpha,\beta).\endaligned\end{eqnarray}

 In particular, Euler's beta function $B(x, y)$ has a close relationship to his gamma function,

\begin{eqnarray} \aligned & B(\alpha,\beta)=B(\beta,\alpha)=\frac{\Gamma(\alpha)\Gamma(\beta)}{\Gamma(\alpha+\beta)}. \endaligned \end{eqnarray}

\section{Extensions of Classical Beta Function do not convergent}

We claim that the above extensions of classical beta function in equations \eqref{B1}, \eqref{B2}, \eqref{B3}, and  \eqref{ex-beta} are  not convergent. We prove it mathematically as well as numerically tested in Section \ref{section-3.1}.
\newline

  First we prove that the extension in equation \eqref{ex-beta} is not convergent as follows.
\begin{claim} If $\Re(\alpha)>0,\Re(\beta)>0, \Re(p)>0$, then the extensions of classical beta function in equation \eqref{ex-beta} does not convergent.  \end{claim}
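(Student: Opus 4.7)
The plan is to use the Maclaurin expansion of the exponential factor to rewrite the integral as a formal series of classical beta integrals, and then to exhibit infinitely many terms of that series whose defining integrals fail to converge. I would first expand
$$\exp\!\left(-\frac{p}{t(1-t)}\right)=\sum_{n=0}^{\infty}\frac{(-p)^n}{n!}\,t^{-n}(1-t)^{-n},$$
which is valid pointwise for $t\in(0,1)$, substitute into \eqref{ex-beta}, and interchange summation and integration formally to obtain
$$B_p(\alpha,\beta) \;=\; \sum_{n=0}^{\infty}\frac{(-p)^n}{n!}\int_0^1 t^{\alpha-n-1}(1-t)^{\beta-n-1}\,dt.$$

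The $n$-th integral is the classical beta integral $B(\alpha-n,\beta-n)$; by the convergence condition in \eqref{B5}, it requires both $\Re(\alpha-n)>0$ and $\Re(\beta-n)>0$. Since $\Re(\alpha)$ and $\Re(\beta)$ are fixed and finite, once $n>\max\{\Re(\alpha),\Re(\beta)\}$ both conditions fail, and these terms are divergent. Because the expansion contains infinitely many such divergent terms, I would conclude that the series representation of $B_p(\alpha,\beta)$ cannot converge, and hence, under this termwise interpretation, the integral \eqref{ex-beta} does not converge either.

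The main obstacle is that the interchange of summation and integration cannot be justified by the standard Fubini-Tonelli theorem, precisely because the individual integrals diverge for large $n$; the argument is therefore essentially formal. I would try to address this by underlining that the obstruction is structural (every term beyond a finite threshold blows up at both endpoints $t=0$ and $t=1$) and by reinforcing the algebraic argument with the numerical experiments to be reported in Section \ref{section-3.1}. The three remaining extensions \eqref{B1}, \eqref{B2}, and \eqref{B3} can then be treated analogously by replacing the exponential expansion with the defining power series of ${}_1F_1(\delta;\zeta;\cdot)$ and observing that each resulting term yields a beta integral whose parameters are eventually driven outside the region of convergence dictated by \eqref{B5}.
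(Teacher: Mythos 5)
Your proposal follows the paper's own proof essentially step for step: expand the exponential in its Maclaurin series, interchange sum and integral, observe that $B(\alpha-n,\beta-n)$ ceases to exist once $n>\max\{\Re(\alpha),\Re(\beta)\}$, and conclude divergence. The one place you depart from the paper is your closing paragraph, where you explicitly concede that the interchange cannot be justified by Fubini--Tonelli --- and that concession is fatal rather than cosmetic. Divergence of the terms of an \emph{unjustified} formal rearrangement carries no information whatsoever about the original integral, and neither ``structural'' blow-up of the terms nor numerical experiments on the rearranged series can close this gap. A one-line counterexample shows the pattern: $\int_0^1 e^{-1/t}\,dt$ is finite (the integrand is bounded by $1$ and vanishes rapidly at $t=0$), yet substituting $e^{-1/t}=\sum_{n\geq 0}(-1)^n t^{-n}/n!$ and integrating termwise produces $\int_0^1 t^{-n}\,dt=\infty$ for every $n\geq 1$. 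Exactly the same phenomenon occurs here.

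In fact the claim you are asked to prove is false, so no repair along these lines (or any other) is possible. For $\Re(p)>0$ and $t\in(0,1)$ one has $t(1-t)\leq 1/4$ and
\begin{equation*}
\left|t^{\alpha-1}(1-t)^{\beta-1}\exp\left(-\frac{p}{t(1-t)}\right)\right|\leq t^{\Re(\alpha)-1}(1-t)^{\Re(\beta)-1}\exp\left(-\frac{\Re(p)}{t(1-t)}\right),
\end{equation*}
and since $\exp\bigl(-\Re(p)/(t(1-t))\bigr)$ tends to zero faster than any power of $t$ as $t\to 0^{+}$ (and of $1-t$ as $t\to 1^{-}$), the integrand extends continuously by $0$ to $[0,1]$; hence the integral \eqref{ex-beta} converges absolutely --- indeed for \emph{all} complex $\alpha,\beta$, which is precisely why Chaudhry et al.\ introduced this extension. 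Termwise integration of your series would require uniform convergence on $[0,1]$ or an integrable dominating function for the partial sums, and neither holds: the expansion converges pointwise on the open interval, but its partial sums blow up at the endpoints, which is where the divergence of the individual beta integrals is manufactured. Your numerical fallback inherits the same defect: truncations of the rearranged series \eqref{p-claim-1-a} misbehave because the rearrangement is invalid, while direct quadrature of \eqref{ex-beta} converges without incident. The same criticism applies verbatim to your proposed treatment of \eqref{B1}, \eqref{B2}, and \eqref{B3} via the ${}_1F_1$ series. In short: you took the same route as the paper, you correctly identified the gap in it, but the gap is irreparable because the statement itself is wrong.
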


\begin{proof} To prove our claim, we  write the exponential function in series form, the equation \eqref{ex-beta} reduces to the following form

\begin{eqnarray}\label{p-claim-1} \aligned & B(\alpha,\beta;p)=\sum_{n=0}^\infty \frac{(-p)^n}{n!}\int_0^\infty t^{\alpha-n-1}(1-t)^{\beta-n-1}dt,  \endaligned \end{eqnarray}

further using the definition of the beta function in the above equation \eqref{p-claim-1}, we have

\begin{eqnarray}\label{p-claim-1-a} \aligned & B(\alpha,\beta;p)=\sum_{n=0}^\infty \frac{(-p)^n}{n!}B(\alpha-n,\beta-n)\\&
=B(\alpha,\beta)+\frac{-p}{1!}B(\alpha-1,\beta-1)+...\\& +\frac{(-p)^n}{n!}B(\alpha-n,\beta-n)+... \rm{to~infinity~terms}.  \endaligned \end{eqnarray}

 In the above equation \eqref{p-claim-1-a}, $B(\alpha,\beta;p)$ is an power series involving $B(\alpha-n,\beta-n)$ (where $n=0,1,2,...$). The series $B(\alpha,\beta;p)$ will be convergent if $B(\alpha-n,\beta-n)$ is convergent and $B(\alpha-n,\beta-n)$ will be convergent only if $\Re(\alpha-n)>0$ and $\Re(\beta-n)>0$. But this is not possible as $\alpha$  and $\beta$ are finite and $n\rightarrow\infty$. The series $B(\alpha,\beta;p)$ contains many terms, which does not exist. Moreover $B(\alpha,\beta;p)$ is the sum of terms involving $B(\alpha-n,\beta-n)$, which are not convergent, which implies that  $B(\alpha,\beta;p)$ is not convergent.
\end{proof}

\begin{example} If we choose $\alpha=5,\beta=7,p=3$, then from equation \eqref{p-claim-1-a}, we have
\begin{eqnarray}\label{example-1} \aligned & B(5,7;3)=\sum_{n=0}^\infty \frac{(-3)^n}{n!}B(5-n,7-n), \endaligned \end{eqnarray}
 The above series $B(5,7;3)$ is the sum of terms involving  the beta function $B(5-n,7-n)$ and $B(5-n,7-n)$ does not exit for $n>5$. Therefor $B(5,7;3)$ is not convergent.
\end{example}

In the following section, we introduce modified extension of classical beta function in equation \eqref{ex-beta-m}. Further, the extension of classical beta function \eqref{ex-beta}, Modified extension of classical beta function \eqref{ex-beta-m} and the classical beta function are compared and tested numerically to test the convergence of extensions of the classical beta function.


\section{Modified extension of beta function}
In this section, we introduce modified extension of classical beta function (MECBF). Its convergence is proved mathematically, then numerical results are established and compared the results with that of the classical beta function and extension of classical beta function.
\newline

We introduce modified extension of classical beta function as follows
\begin{eqnarray}\label{ex-beta-m} \aligned & B_m(\alpha,\beta)=B(\alpha,\beta;m)\\&\hskip 10mm=\int_0^1 t^{\alpha-1}(1-t)^{\beta-1}e^{mt(1-t)}dt, \endaligned \end{eqnarray}
where $\Re(\alpha)>0,\Re(\beta)>0, m\in\mathbb{C};|m|<M$ (where $M$ is positive finite real number).
\newline

In our investigation to test the convergence of the above extension of the classical beta function  the following definitions are required.

\begin{definition}[Ratio Test\cite{green-1958}]\label{ratiotest} Let $\sum_{n=1}^\infty p_n$ be a series of positive terms, and suppose that $\displaystyle\frac{p_{n+1}}{p_n}\rightarrow {\rm a\,\,limit\,\,} l$ as $n\rightarrow \infty$. Then (i) If $l<1$ the series $\sum p_n$ is convergent, (ii) If $l>1$, then the series is divergent. If $l=1$, the ratio test fails, and the question of convergence of $\sum p_n$ must be investigated by some other methods. \end{definition}

\begin{definition}[Leibniz's Test or Alternating Series Test\cite{green-1958}]\label{Leibniz} The series $\sum_{n=1}^\infty(-1)^{n+1}a_n$ (where $a_n>0$) is convergent provided (i) $a_n$ is a decreasing sequence, (ii) $a_n\rightarrow 0$ as $n\rightarrow\infty$. \end{definition}

\begin{theorem} If $\Re(\alpha)>0,\Re(\beta)>0, m\in\mathbb{C};|m|<M$ (where $M$ is positive finite real number), then the modified extension of the classical beta function in equation \eqref{ex-beta-m} is convergent.  \end{theorem}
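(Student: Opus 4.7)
The plan is to mimic the series-expansion strategy used in the previous claim, but exploiting the crucial fact that the exponent $mt(1-t)$ now vanishes at the endpoints $t=0$ and $t=1$ rather than blowing up there. Specifically, I would expand
\begin{equation*}
e^{mt(1-t)} = \sum_{n=0}^{\infty} \frac{m^n\,t^n(1-t)^n}{n!},
\end{equation*}
then, after justifying the interchange of sum and integral, obtain
\begin{equation*}
B_m(\alpha,\beta) = \sum_{n=0}^{\infty} \frac{m^n}{n!}\, B(\alpha+n,\beta+n).
\end{equation*}
Unlike in the previous claim, every beta function appearing now is well defined, since $\Re(\alpha),\Re(\beta)>0$ forces $\Re(\alpha+n),\Re(\beta+n)>0$ for every $n\geq 0$; this is precisely the structural asymmetry that rescues the present extension while dooming the one in \eqref{ex-beta}.

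To justify the interchange I would use the elementary bound $0\leq t(1-t)\leq 1/4$ on $[0,1]$, which yields $|m^n t^n(1-t)^n/n!|\leq (|m|/4)^n/n!$, making the partial sums converge uniformly on $[0,1]$ and hence permitting termwise integration. Convergence of the resulting series is then attacked using the Ratio Test (Definition \ref{ratiotest}) applied to the modulus series $\sum \frac{|m|^n}{n!}\,|B(\alpha+n,\beta+n)|$. Writing $B(\alpha+n,\beta+n)=\Gamma(\alpha+n)\Gamma(\beta+n)/\Gamma(\alpha+\beta+2n)$, a short computation yields
\begin{equation*}
\frac{B(\alpha+n+1,\beta+n+1)}{B(\alpha+n,\beta+n)} = \frac{(\alpha+n)(\beta+n)}{(\alpha+\beta+2n)(\alpha+\beta+2n+1)} \longrightarrow \frac{1}{4}
\end{equation*}
as $n\to\infty$, so the ratio of consecutive terms in the modulus series behaves like $|m|/\bigl(4(n+1)\bigr)\to 0$, which is strictly less than $1$ for all sufficiently large $n$. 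The Ratio Test then delivers absolute convergence for every $m$ with $|m|<M$ (and in fact for every $m\in\mathbb{C}$).

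The main obstacle I foresee is cosmetic rather than substantive: the Ratio Test as quoted applies only to series of positive terms, so one must be careful to apply it to the modulus series above and then invoke absolute convergence, rather than to the complex series directly. A secondary point of care is the Fubini/Tonelli justification for the sum--integral interchange when $\alpha,\beta$ are complex, but the uniform majorant $(|m|/4)^n/n!$ reduces this to the standard case. If one wishes to avoid the series machinery altogether, an even shorter route is available: the same bound $t(1-t)\leq 1/4$ gives $|e^{mt(1-t)}|\leq e^{|m|/4}$, whence $|B_m(\alpha,\beta)|\leq e^{|m|/4}\,B(\Re(\alpha),\Re(\beta))<\infty$ directly. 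The series proof, however, is preferable here because it dovetails with the diagnostic expansion used in the previous claim and makes the contrast with \eqref{ex-beta} transparent.
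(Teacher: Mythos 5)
Your proposal is correct, and it takes a genuinely different---and strictly stronger---route than the paper. Both arguments begin identically, expanding $e^{mt(1-t)}$ and integrating termwise to reach $B(\alpha,\beta;m)=\sum_{n=0}^\infty \frac{m^n}{n!}B(\alpha+n,\beta+n)$ as in \eqref{p-claim-2-a}, but from there the paper splits into two real cases: for $m>0$ it applies the ratio test (in the reciprocal form $a_n/a_{n+1}\rightarrow\infty$), and for $m<0$ it invokes Leibniz's alternating series test. In that second case the paper asserts that $p^n/n!\rightarrow 0$ as $n\rightarrow\infty$ \emph{only if} $p\leq 2$, which is false ($p^n/n!\rightarrow 0$ for every fixed $p>0$, since $n!$ dominates any geometric sequence), and on that basis it concludes convergence only for $-2\leq m<0$; this error propagates into the paper's later numerically motivated ``radius of convergence'' $M\approx 2.0335$. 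The paper also never addresses genuinely complex $m$, even though the theorem statement allows $m\in\mathbb{C}$, and it offers no justification for the sum--integral interchange. Your single application of the ratio test to the modulus series $\sum \frac{|m|^n}{n!}\left|B(\alpha+n,\beta+n)\right|$, with the computation $\frac{(\alpha+n)(\beta+n)}{(\alpha+\beta+2n)(\alpha+\beta+2n+1)}\rightarrow \frac{1}{4}$ giving consecutive-term ratios $\sim |m|/(4(n+1))\rightarrow 0$, yields absolute convergence for \emph{all} $m\in\mathbb{C}$ in one stroke, dispensing with the case split, covering the complex case, and exposing the paper's spurious restriction $|m|<M$ as an artifact of the erroneous Leibniz step. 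Your uniform majorant $(|m|/4)^n/n!$ properly licenses the termwise integration the paper takes for granted, and your alternative direct bound $|e^{mt(1-t)}|\leq e^{|m|/4}$, giving $|B(\alpha,\beta;m)|\leq e^{|m|/4}B(\Re(\alpha),\Re(\beta))$, is an even shorter complete proof that the integral converges. In short: where the paper's proof contains a genuine error and proves less than it claims, yours is correct and proves more---the series defines an entire function of $m$.
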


\begin{proof} We can write the above equation \eqref{ex-beta-m} as follows

\begin{eqnarray}\label{p-claim-2} \aligned & B(\alpha,\beta;m)=\int_0^1 t^{\alpha-1}(1-t)^{\beta-1}\sum_{n=0}^\infty \frac{[mt(1-t)]^n}{n!}dt\\&
\hskip 10mm=\sum_{n=0}^\infty \frac{(m)^n}{n!}\int_0^1 t^{\alpha+n-1}(1-t)^{\beta+n-1}dt,  \endaligned \end{eqnarray}

further, using the definition of beta function, the  above equation \eqref{p-claim-2} reduces to

\begin{eqnarray}\label{p-claim-2-a} \aligned & B(\alpha,\beta;m)=\sum_{n=0}^\infty \frac{(m)^n}{n!}B(\alpha+n,\beta+n).  \endaligned \end{eqnarray}
 In the above equation, $B(\alpha,\beta;m)$ is in series form involving $B(\alpha+n,\beta+n)$ (where $n=0,1,2,...$) and in each term  of the series, $B(\alpha+n,\beta+n)$ is convergent since $\Re(\alpha+n)>0$ and $\Re(\beta+n)>0$ for $\Re(\alpha),\Re(\beta)>0$, which implies that each term of the series \eqref{p-claim-2-a} exist.
\newline

Now we shall prove  that $B(\alpha,\beta;m)$ is convergent. $m$ may be greater than or less than $0$, so there are two cases as follows.

\begin{case}\label{case-1} If $m>0$, then to prove $B(\alpha,\beta;m)$ is convergent. \end{case}
 The equation \eqref{p-claim-2-a} can be written as

\begin{eqnarray}\label{p-beta-2-b} \aligned & B(\alpha,\beta;m)=\sum_{n=0}^\infty a_n, \,\, \\&\hskip 20mm{\rm where}\,\, \displaystyle a_n=\frac{(m)^n}{n!}B(\alpha+n,\beta+n).  \endaligned \end{eqnarray}
 Further

\begin{eqnarray}\label{p-beta-2-c} \aligned & \lim_{n\rightarrow \infty}\frac{a_n}{a_{n+1}}=\infty>1. \endaligned \end{eqnarray}

By ratio test,  $B(\alpha,\beta;m)$ is convergent for $m>0.$
\newline

\begin{case}\label{case-2} If $m<0$, then to prove that the extension of classical beta function $B(\alpha,\beta;m)$ is convergent. \end{case}

To prove this case, let $m=-p$ (where $p>0$), then equation \eqref{p-claim-2-a}
\begin{eqnarray}\label{p-beta-2-d} \aligned & B(\alpha,\beta;m)=\sum_{n=0}^\infty \frac{(-p)^n}{n!}B(\alpha+n,\beta+n), \endaligned \end{eqnarray}

the above equation \eqref{p-beta-2-d} can be written as
\begin{eqnarray}\label{p-beta-2-e} \aligned & B(\alpha,\beta;m)=-\sum_{n=0}^\infty (-1)^{n-1}b_n,\,\,\\&\hskip 20mm{\rm where}\,\, \displaystyle b_n=\frac{(p)^n}{n!}B(\alpha+n,\beta+n). \endaligned \end{eqnarray}

The series \eqref{p-beta-2-e} is an alternating series, therefor
\begin{enumerate}
	\item  ~$a_n>0$, $\forall p>0,\Re(\alpha,\beta)>0$
	\item ~$\displaystyle a_n-a_{n+1}=\frac{p^n}{n!}B(\alpha+n,\beta+n)~~~~~~~~~~$ $\displaystyle\times\left[1-\frac{(\alpha+n)(\beta+n)}{(\alpha+\beta+2n)(\alpha+\beta+2n+1)}\right] >0$ $\Rightarrow a_n$ is decreasing
	\item ~$\displaystyle\lim_{n\rightarrow \infty}a_n=0$ if $p\leq2$ ($\displaystyle {p^n}/{n!}\rightarrow 0$ as $n\rightarrow\infty$ only if $p\leq 2$ and  $B(\alpha+n,\beta+n)\rightarrow 0$ as $n\rightarrow \infty$)
\end{enumerate}
All the conditions of Leibniz's test have been satisfied, therefore $B(\alpha,\beta;m)$ is convergent for $0<p\leq 2\,\,\, i.e.\,-2\leq m<0$.
\newline

From the above two cases \ref{case-1} and \ref{case-2} implies that the power series in equation \eqref{p-claim-2-a} is convergent.
\end{proof}

\section{Comparison of numerical values of extended beta function, modified extension of the classical beta function and classical beta function}\label{section-3.1}
In Table 1, the columns $\rm B1, B2$ are the values of the  classical beta function $B(x,y)$; the columns $\rm EB1,$ $\rm EB2,$ $\rm EB3,$ $\rm EB4$ are the values of the extension of classical beta  

\begin{table}[ht]\caption{Comparison of extensions of classical beta function}
  \label{table-1}
  	\centerline{\includegraphics[width=5.9in, height=2.4in, keepaspectratio=false]{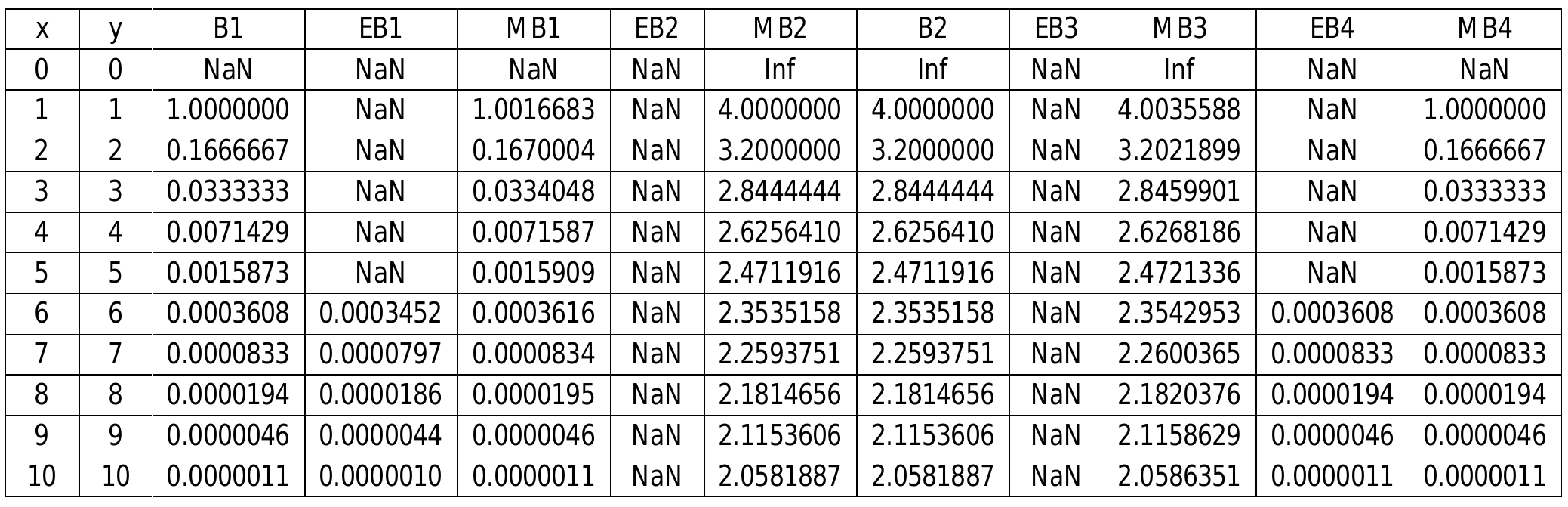}}
\end{table}
 \begin{table}[ht]\caption{Numerical Values of MECBF}
  \label{table-2}
  	\centerline{\includegraphics[width=5.9in, height=4.4in, keepaspectratio=false]{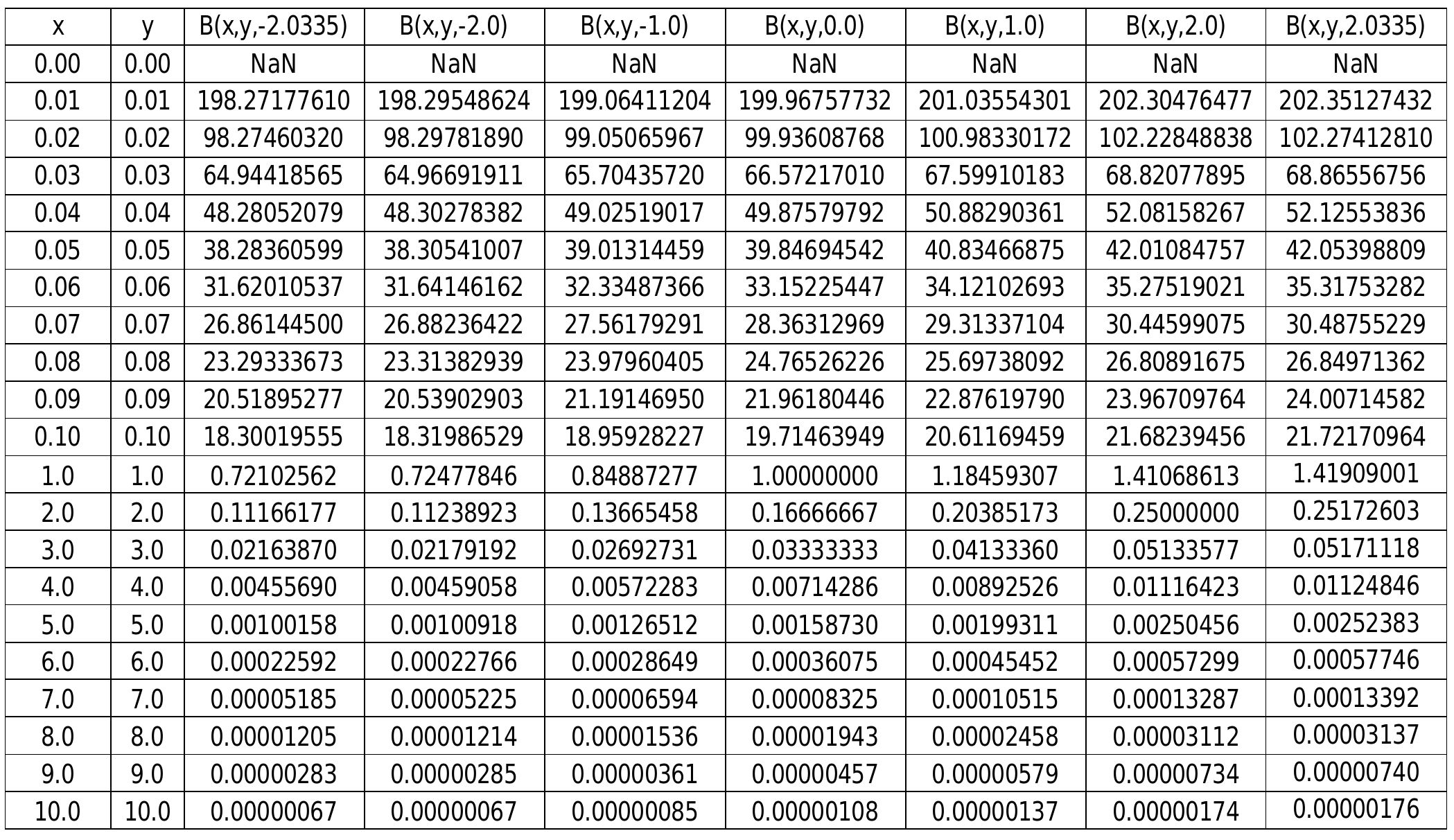}}
\end{table}

\noindent      function; the columns $\rm MB1,$ $\rm MB2,$  $\rm MB3,$ $\rm MB4$ are values of  modified extension of the  classical beta function, where $\rm  B1=$ $\rm B(x,y)$, $\rm  EB1=$ $\rm B(x,y;0.01)$, $\rm  MB1= B(x,y;0.01)$, $\rm  B2=$ $\rm B(x,0.25)$,  $\rm  EB2$  $\rm =B(x,0.25;0.00)$, $\rm  MB2=$ $\rm B(x,0.25;0.00)$, $\rm  EB3=$ $\rm B(x,0.25;0.01)$, $\rm  MB3=$ $\rm B(x,0.25;0.01)$, $\rm  EB4=$ $\rm B(x,y;0.00)$;  
    $\rm  MB4=$ $\rm B(x,y;0.00)$ and $\rm  x,y=0:1:10$.
\newline

These values are computed by employing Matlab (R2012a). The values of the extension of the classical beta function are computed by using the equation \eqref{p-claim-1-a} and those of modified extension of the classical beta function from equation \eqref{p-claim-2-a}. Both the extension in equations \eqref{p-claim-1-a} and \eqref{p-claim-2-a} are in series form involving beta functions $B(x-n,y-n)$ (exist for $\Re(x-n>0,y-n)>0$) and $B(x+n,y+n)$ (exist for $\Re(x+n>0,y+n)>0$) respectively, and the values can be computed easily. Therefor the values of extension of the classical beta function \eqref{p-claim-1-a} are  computed by taking sum of the first five terms of the series in the view of existing values of $B(x-n,y-n)$ to find some numerical values and in the case of modified extension of classical beta function, sum of first 1000 terms have been taken for $x,y=0:1:10$. The numerical results of the same  are established in Table 1.
\newline

From Table 1, it is easily observed that the values of extended beta function in columns $\rm EB1,EB4$ does not exist for $x,y=0:1:5$ (since  $\Re(x-n<0,y-n)<0$); rest values of the same exist for $x,y=6:1:10$ (since  $\Re(x-n)>0, \Re(y-n)<0$). In columns $\rm EB2,EB3$ the values of extended beta function do not exist since in these cases we choose $y=0.25$ therefor $\Re(y-n)$ becomes less than zero, which implies if $0<x,y<1$ the values of the extended beta function do not exist even if sum of first two terms of the series \eqref{p-claim-1-a} to be taken. If we take large sum of the terms of the series \eqref{p-claim-1-a} (say $n=1000$) the extended beta function will not converge for any value of $x,y\in[0,1000]$ from which we conclude that the  extended beta function does not exits for any value of $x$ and $y$, since it is series having infinite terms and $\Re(x),\Re(y)$ can not be infinite.
\newline

In the case of modified extension of classical beta function, the columns $\rm B1,MB4$ are identical, which implies that if we choose $m=0$, we have the values of classical beta function from the modified extension of the classical beta function. Column $\rm B2, MB2$ are also identical since $m=0$ and at the fix value of $y=0.25$. If we compare the values of $\rm B1$ with $\rm MB1$ and those of $\rm B2$ with $\rm MB3$, we can see the impact of the value of $m$ on the classical beta function.

\begin{note}
It is also noted that if $p=0$, the extension of classical beta function should be reduced to the classical beta function $i.e.$ $B(\alpha,\beta;0)$ $ =B(\alpha,\beta)$. But the same not depict in  the graph cited in \cite[Fig. 1, p. 31]{Chaudhry-Qadir-Rafique-Zubair-1997}. The graph  depicts the values of extension of the classical beta function $B(1,2.25;0)=$ $ B(1,0.25)=13.75$ when $p=0,x=1,$ $ y=0.25$  and $B(10,2.25;0)=$ $ B(10,0.25)=2.18$(approximately) when $p=0,$ $ x=10,$ $ y=0.25$, which are not the correct values of the classical beta function. The correct values of the classical beta function at these point are $B(1,0.25)=4$ and $B(10,0.25)$ $ =2.0582$. It also can be easily observed from the figure that when $p\neq 0$ the graph of extended beta function totally different from that of the classical beta function. $i.e.$ the behavior and nature of the curves differ from that of classical beta function.
\end{note}
\begin{note} From the above discussion, it is very clear that  the extension of classical beta function in equation \eqref{ex-beta} do not convergent.  The extensions in equations \eqref{B1}, \eqref{B2} and \eqref{B3} are further extension of the extended beta function \eqref{ex-beta}, therefor all these extensions also can not be convergent.
\end{note}

\begin{note}
  Due to the lack of the convergence of the extension of the classical beta function, all the associated results established by Chaudhary $et\,\, al.$ \cite{Chaudhry-Qadir-Rafique-Zubair-1997} do not convergent.
\end{note}

\section{Numerical results and their interpretation of modified extension of the classical beta function}\label{section-3.2}
The numerical results of modified extension of classical beta function (MECBF) have been calculated in this section by employing the Matlab. We choose the values of variable $x,y$ and parameter $m$ as $x,y\in [0,10]$ and $m\in[-2.0335,2.0335]$. All the numerical values of MECBF are presented in Table 1, from which we can easily observe that $B(x,y;m)$ does not exist at $x=y=0$ and it is also tested that $B(x,y;m)$ does not exist for $m<-2.0335$ and $m>2.0335$. It can be easily investigated that $B(x,y;m)\rightarrow \infty$ as $x,y\rightarrow 0$ and $B(x,y;m)\rightarrow 0$ as $x,y\rightarrow \infty$, which implies that the behavior of modified extension of classical beta function is the same as that of classical beta function, which can be observed from Table 2 and Figure 1.   Also $B(x,y;0)$ represents the values of classical beta function $B(x,y)$.

 \begin{figure}[ht]
  \label{figure-1}
  	\centerline{\includegraphics[width=4.5in, height=4in, keepaspectratio=false]{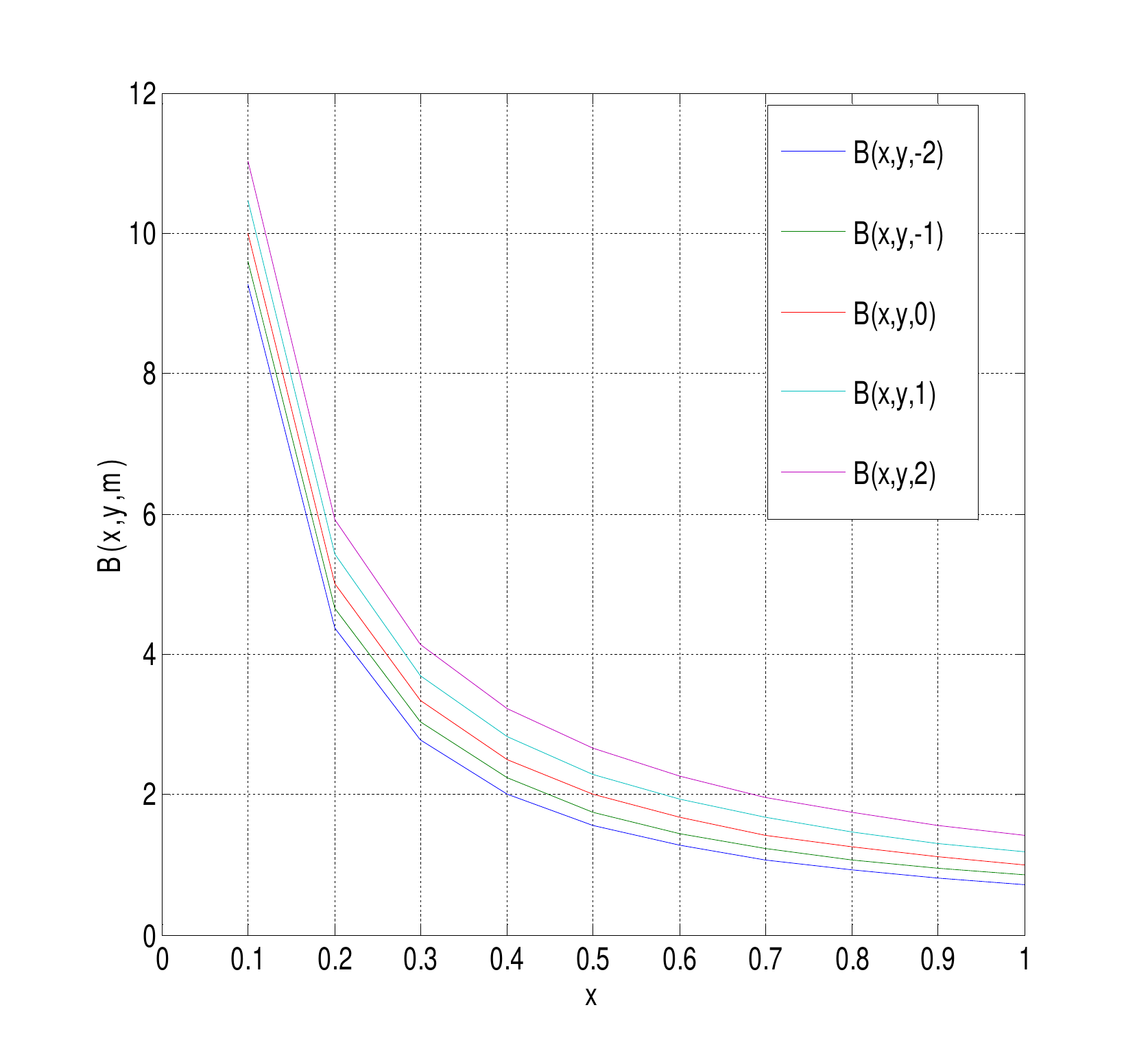}}
		\caption{Graphical presentation of of MECBF for y=1.}
\end{figure}


\section{Geometrical interpretation of modified extension of the classical beta function}\label{section-3.3}
 If  $\beta=1,$ $ m\in[-2,2]$ and $x\in[0,1]$, then we observe that graph of $B(x,y;m)$ in Figure 1 is decreasing. In figure 2, we choose $m=2$ and $x,y\in[0, 25]$, which depicts that as $x,y\rightarrow 0$, $B(x,y;m)\rightarrow\infty$ and $B(x,y;m)\rightarrow 0$ as $\alpha,\beta\rightarrow \infty$. We also check the effect of $m$ on the modified extension of classical beta function. For this purpose, we fix the values of $x$ and $y$ as shown in figure 3, then we plot the graph which depicts that $B(x,y;m)$ is an increasing function as the values of $m$ increase. It is very clear from Figure 1 that for the graph of classical beta function and modified extension of classical beta function remains concave upward (or convex downward) for different values of $\alpha,\beta$ and $m$. The value of $m$ does not effect the nature of classical beta function, the main effect of the value of $m$ is that it just push the curve up or drag down the curve from the curve of the classical beta function as shown in the Figure 1 the same behavior can be observed from the Table 2.

\begin{figure}[ht]
  \label{figure-2}
  	\centerline{\includegraphics[width=4.5in, height=4in, keepaspectratio=false]{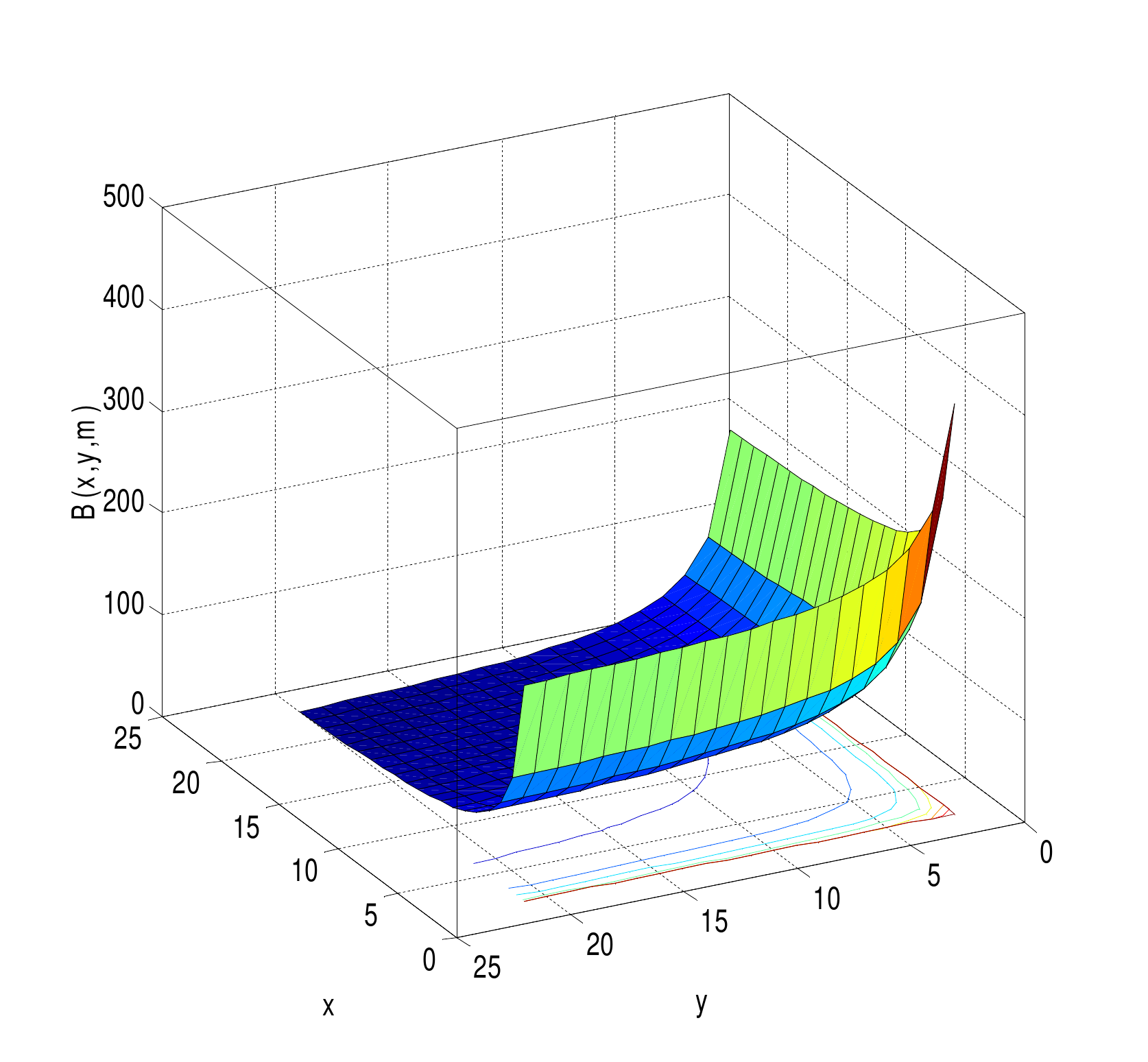}}
		\caption{3D presentation of of MECBF.}
\end{figure}


From the above proof of radius of convergence of series and further more numerically investigation of the power series in Table 1, we find that the interval of convergence of the series is $(-2.0336,2.0336)$, which implies that $B(x,y;m)$ is convergent for $|m|<M$ where $M$ is positive real number slightly greater than 2.0335.

\begin{note}From the above discussion, it is easy to conclude that the value of  $\Re(m)$ lies in the interval $[-2.0335,2.0335]$ $i.e.$ $-2.0335\leq\Re(m)\leq 2.0335$.\end{note}
\begin{note}
In the sequel of this paper, $|m|<M$ represents the circle of convergence and $M$  is the radius of convergence of equation \eqref{ex-beta-m}, where $M$ is sightly grater than from $2.0335.$
\end{note}


\begin{figure}[ht]
  \label{figure-3}
  	\centerline{\includegraphics[width=4.5in, height=4in, keepaspectratio=false]{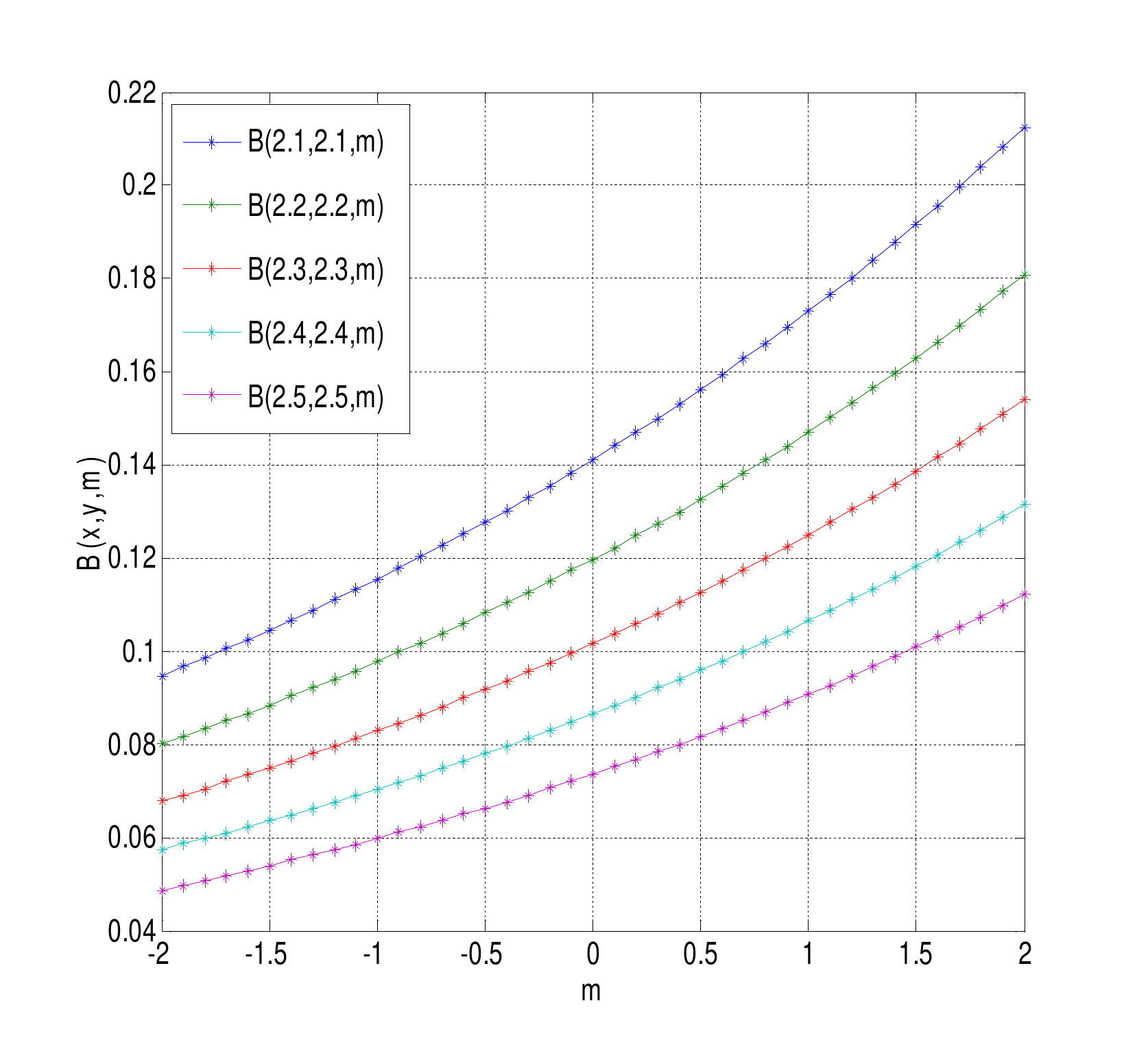}}
		\caption{Graph of MECBF for fixed value of $x,y$ and $m=-2:.5:2$. }
\end{figure}


\section{Integral representation of the MECBF function}

The integral representation of the modified extended beta function is important to check both the extension is natural and simple and for use later. It is also important to investigate the relationship between the classical beta function and  the modified extension of the classical beta function. In this connection we first provide a relationship between them. The following integral formula is useful for further investigation \cite{grad-ryzhik}
\begin{eqnarray}\label{int-for-1} \aligned & \int_0^\infty x^m\exp(-\beta{}x^n)dx=\frac{\Gamma(\gamma)}{n\beta^\gamma},\hskip 2mm \left( \gamma=\frac{m+1}{n}\right). \endaligned \end{eqnarray}

\begin{theorem}[Relation between modified extension of the classical beta function and the classical beta function] If $\Re(\alpha+s)>0,$ $\Re(\beta+s)>0,$ $ m\in\mathbb{C};$ $|m|<M$ (where $M$ is positive finite real number slightly greater than 2.0335), then we have the following relation
\begin{eqnarray}\label{relation-1} \aligned & \int_0^\infty m^{s-1}B(\alpha,\beta;m)dm=(-1)^s\Gamma(s)B(\alpha+s,\beta+s). \endaligned \end{eqnarray}
\end{theorem}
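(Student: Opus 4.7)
The plan is to substitute the integral definition \eqref{ex-beta-m} of $B(\alpha,\beta;m)$ into the left-hand side and swap the order of integration so that the $m$-integral sits on the inside:
$$\int_0^\infty m^{s-1}B(\alpha,\beta;m)\,dm = \int_0^1 t^{\alpha-1}(1-t)^{\beta-1}\left[\int_0^\infty m^{s-1}\exp\bigl(mt(1-t)\bigr)\,dm\right]dt.$$
This Fubini-type exchange is formal (see the caveat below) and is the first real step of the argument.

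Next I would evaluate the bracketed $m$-integral by matching it against the cited formula \eqref{int-for-1}: taking $n=1$, $\gamma=s$, and identifying the exponential coefficient with $-t(1-t)$, \eqref{int-for-1} delivers $\Gamma(s)\,(-t(1-t))^{-s}$. Selecting the branch of $(-1)^{-s}$ that the authors tacitly use, this becomes $(-1)^s\,\Gamma(s)\,[t(1-t)]^{-s}$. Substituting back, the outer integral collapses to $\int_0^1 t^{\alpha-s-1}(1-t)^{\beta-s-1}\,dt$, which is an Eulerian integral of the first kind, i.e.\ a classical beta function by \eqref{B5}. Pulling the prefactor $(-1)^s\Gamma(s)$ out gives the right-hand side of \eqref{relation-1}.

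The main obstacle is the inner $m$-integral: since $t(1-t)>0$ on $(0,1)$, the integrand $m^{s-1}\exp(mt(1-t))$ grows exponentially in $m$, so $\int_0^\infty m^{s-1}\exp(mt(1-t))\,dm$ does not converge as a genuine Lebesgue integral, and invoking \eqref{int-for-1} with a negative value of its parameter $\beta$ amounts to an analytic-continuation argument rather than a direct computation. One must also pick a consistent branch of $(-t(1-t))^{-s}$ in order to produce the prefactor $(-1)^s$ rather than $(-1)^{-s}$. A cleaner alternative route would be to work from the power-series representation \eqref{p-claim-2-a}, $B(\alpha,\beta;m)=\sum_{n}\frac{m^n}{n!}B(\alpha+n,\beta+n)$, and apply Ramanujan's master theorem with coefficient sequence $\phi(n)=B(\alpha+n,\beta+n)$; that route produces the $\Gamma(s)$ and the shifted beta function on the right-hand side directly from the coefficients, and makes transparent the interval of $s$ for which the identity is meaningful (essentially the strip on which the Mellin transform of the entire function of $m$ on the left can be analytically continued).
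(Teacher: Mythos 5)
Your route is the same as the paper's own proof: insert the integral definition \eqref{ex-beta-m}, interchange the two integrals, evaluate the inner $m$-integral by \eqref{int-for-1}, and recognize the remaining $t$-integral as a classical beta function. Your divergence caveat is also well taken and is, in fact, more careful than the source: since $t(1-t)>0$ on $(0,1)$, the inner integral $\int_0^\infty m^{s-1}e^{mt(1-t)}\,dm$ diverges for every $t\in(0,1)$, so the paper's appeal to \eqref{int-for-1} (which requires a positive exponential parameter) is a silent analytic continuation, exactly as you say.

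However, there is a genuine gap at your final step: your own computation does not produce the stated right-hand side, and you assert the match without checking it. From $\Gamma(s)\,(-t(1-t))^{-s}=(-1)^{-s}\,\Gamma(s)\,[t(1-t)]^{-s}$, the outer integral becomes, exactly as you write, $\int_0^1 t^{\alpha-s-1}(1-t)^{\beta-s-1}\,dt=B(\alpha-s,\beta-s)$ --- the arguments are shifted \emph{down} by $s$, not up --- whereas the right-hand side of \eqref{relation-1} is $(-1)^{s}\Gamma(s)B(\alpha+s,\beta+s)$. So the claim that ``pulling the prefactor out gives the right-hand side'' is false as written: what your computation formally yields is $(-1)^{-s}\Gamma(s)B(\alpha-s,\beta-s)$, and note additionally that $(-1)^{-s}\neq(-1)^{s}$ unless $s$ is an integer, so the branch choice cannot be waved away. (This mismatch in fact exposes a slip in the paper's own step \eqref{relation-1-pro-3}, which writes $t^{\alpha+s-1}(1-t)^{\beta+s-1}$ where the substitution produces $t^{\alpha-s-1}(1-t)^{\beta-s-1}$; the $s=1$ sanity check, using $\int_0^\infty e^{ma}\,dm$ ``$=$'' $-1/a$ with $a=t(1-t)$, gives $-B(\alpha-1,\beta-1)$, not the $-B(\alpha+1,\beta+1)$ asserted in the paper's remark.) Your proposed alternative via Ramanujan's master theorem suffers the same defect: applied to $f(m)=\sum_n \frac{m^n}{n!}B(\alpha+n,\beta+n)$, it returns $\Gamma(s)$ times the coefficient function evaluated at $-s$, i.e.\ again $B(\alpha-s,\beta-s)$ up to the branch factor, not the $B(\alpha+s,\beta+s)$ claimed in \eqref{relation-1}. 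So neither your main route nor your alternative proves the statement as it stands; both derive a different (downward-shifted) identity in a regularized sense, and the discrepancy with the theorem should have been flagged rather than asserted away.
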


\begin{proof} Multiplying both sides of equation \eqref{ex-beta-m} by $m^{s-1}$, then integrate w.r.t. $m$ from $m=0$ to $m=\infty$, we have
\begin{eqnarray}\label{relation-1-pro-1} \aligned & \int_0^\infty m^{s-1}B(\alpha,\beta;m)dm\\&=\int_0^\infty m^{s-1}\left[\int_0^1 t^{\alpha-1}(1-t)^{\beta-1}\exp(mt(1-t))dt\right]dm, \endaligned \end{eqnarray}

interchanging the order of integration, the above equation \eqref{relation-1-pro-1}, reduces to

\begin{eqnarray}\label{relation-1-pro-2} \aligned & \int_0^\infty m^{s-1}B(\alpha,\beta;m)dm\\&=\int_0^1 t^{\alpha-1}(1-t)^{\beta-1}\left[\int_0^\infty m^{s-1}\exp(mt(1-t))dm\right]dt, \endaligned \end{eqnarray}

further using the formula given in equation \eqref{int-for-1}, after simplification the above equation \eqref{relation-1-pro-2} reduces to

\begin{eqnarray}\label{relation-1-pro-3} \aligned & \int_0^\infty m^{s-1}B(\alpha,\beta;m)dm\\&=(-1)^s\Gamma(s)\int_0^1 t^{\alpha+s-1}(1-t)^{\beta+s-1}dt, \endaligned \end{eqnarray}
using the definition of classical beta function, we have the required result.
\end{proof}

\begin{remark}
By setting $s=1$, the result in equation \eqref{relation-1} reduces to
\begin{eqnarray} \aligned & \int_0^\infty B(\alpha,\beta;m)dm=-B(\alpha+1,\beta+1), \\&\hskip 20mm \Re(\alpha)>-1,\Re(\beta)>-1,\endaligned \end{eqnarray}
gives the interesting relation between classical beta function and modified extended beta function.
\end{remark}

\begin{remark}
All the derivatives of the modified extension of classical beta function (MECBF) with respect to the parameter $m$ can be expressed in terms of the function  as
\begin{eqnarray} \aligned & \frac{\partial^n }{\partial^nm}B(\alpha,\beta;m) =B(\alpha+n,\beta+n),\\& \hskip 20mm \Re(\alpha+n)>0,\Re(\beta+n)>0.\endaligned \end{eqnarray}

\end{remark}

\begin{theorem}[Integral representations of the modified extension of the classical beta function] If $\Re(x)>0,$ $\Re(y)>0, $ $m\in\mathbb{C};$ $|m|<M$ (where $M$ is positive finite real number slightly greater than 2.0335), then we have the following relation

\begin{eqnarray}\label{Bxy-1} \aligned & B(\alpha,\beta;m)=2\int_0^{\frac{\pi}{2}}\cos^{2\alpha-1}\theta\sin^{2\beta-1}\theta\\&\hskip20mm\times\exp(m\cos^2\theta\sin^2\theta)d\theta, \endaligned \end{eqnarray}

\begin{eqnarray}\label{Bxy-2} \aligned & B(\alpha,\beta;m)=\int_0^{\infty}\frac{u^{\alpha-1}}{(1+u)^{\alpha+\beta}}\exp\left({mu}/{(1+u)^2}\right)du,\endaligned \end{eqnarray}

\begin{eqnarray}\label{Bxy-3} \aligned & B(\alpha,\beta;m)=\frac{1}{2}\int_{0}^{\infty}\frac{u^{\alpha-1}+u^{\beta-1}}{(1+u)^{\alpha+\beta}}\\&\hskip20mm\times\exp\left({mu}/{(1+u)^2}\right)du,\endaligned \end{eqnarray}

\begin{eqnarray}\label{Bxy-4} \aligned & B(\alpha,\beta;m)=(c-a)^{1-\alpha-\beta}\int_{a}^{c}(u-a)^{\alpha-1}(c-u)^{\beta-1}\\&\hskip20mm\times\exp\left(\frac{m(u-a)(c-u)}{(c-a)^2}\right)du,\endaligned \end{eqnarray}

\begin{eqnarray}\label{Bxy-5} \aligned & B(\alpha,\beta;m)=2^{1-\alpha-\beta}\int_{-1}^{1}(1+t)^{\alpha-1}(1-t)^{\beta-1}\\&\hskip20mm\times\exp\left(m(1-t^2)/4\right)du,\endaligned \end{eqnarray}

\begin{eqnarray}\label{Bxy-6} \aligned & B(\alpha,\beta;m)=2^{1-\alpha-\beta}\\&\times\int_{-\infty}^{\infty}\exp\left[(\alpha-\beta)x+\frac{m}{4\cosh^2x}\right]\frac{dx}{(\cosh x)^{\alpha+\beta}},\endaligned \end{eqnarray}

\begin{eqnarray}\label{Bxy-7} \aligned & B(\alpha,\beta;m)=2^{2-\alpha-\beta}\int_{0}^{\infty}\cosh((\alpha-\beta)x)\\&\hskip20mm\times\exp\left[\frac{m}{4\cosh^2x}\right]\frac{dx}{(\cosh x)^{\alpha+\beta}},\endaligned \end{eqnarray}

\begin{eqnarray}\label{Bxy-8} \aligned & B(\alpha,\beta;m)=2^{1-\alpha-\beta}\\&\times\int_{-\infty}^{\infty}\exp\left[\frac{1}{2}(\alpha-\beta)x+\frac{m}{2\cosh x}\right]\frac{dx}{(\cosh x/2)^{\alpha+\beta}},\endaligned \end{eqnarray}

\begin{eqnarray}\label{Bxy-9} \aligned & B(\alpha,\beta;m)=2^{2-\alpha-\beta}\int_{0}^{\infty}\cosh((\alpha-\beta)x/2)\\&\hskip20mm\times\exp\left[\frac{m}{2\cosh x}\right]\frac{dx}{(\cosh x/2)^{\alpha+\beta}}.\endaligned \end{eqnarray}

 \end{theorem}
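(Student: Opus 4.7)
The uniform strategy is to obtain each of the nine representations by an appropriate change of variable in the defining integral
\[
B(\alpha,\beta;m) = \int_0^1 t^{\alpha-1}(1-t)^{\beta-1}e^{mt(1-t)}\,dt.
\]
Each substitution is exactly the one that produces the well-known analogue for the classical $B(\alpha,\beta)$; the novelty here is merely that the factor $e^{mt(1-t)}$ is carried along and $t(1-t)$ is re-expressed in the new variable.

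I would first dispatch the algebraic substitutions. For \eqref{Bxy-1}, the trigonometric substitution $t=\cos^2\theta$, $1-t=\sin^2\theta$ yields $t(1-t)=\cos^2\theta\sin^2\theta$ and $dt=-2\sin\theta\cos\theta\,d\theta$; reversing the limits produces the factor $2$ in front. For \eqref{Bxy-2}, the rational substitution $t=u/(1+u)$ gives $1-t=1/(1+u)$, $t(1-t)=u/(1+u)^2$, and $dt=du/(1+u)^2$, so the unit interval is mapped to $[0,\infty)$ and the prescribed integrand appears after collecting powers of $1+u$. For \eqref{Bxy-4} I would use the affine change $t=(u-a)/(c-a)$, so that $t(1-t)=(u-a)(c-u)/(c-a)^2$ and the prefactor $(c-a)^{1-\alpha-\beta}$ is generated; \eqref{Bxy-5} is then the special case $a=-1$, $c=1$. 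For \eqref{Bxy-3}, I would start from \eqref{Bxy-2} and apply the further substitution $u\mapsto 1/u$, which produces an equivalent integral with $u^{\alpha-1}$ replaced by $u^{\beta-1}$ in the kernel while leaving the exponent $mu/(1+u)^2$ invariant; averaging the two forms yields the symmetrized factor $\tfrac12(u^{\alpha-1}+u^{\beta-1})$.

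The hyperbolic representations arise from two further substitutions in the same spirit. For \eqref{Bxy-6}, I would take $t=e^x/(2\cosh x)$, under which $1-t=e^{-x}/(2\cosh x)$, $t(1-t)=1/(4\cosh^2 x)$, and $dt=dx/(2\cosh^2 x)$; collecting powers of $\cosh x$ and the factor $e^{(\alpha-\beta)x}$ reproduces the stated form, with the limits $t:0\to 1$ corresponding to $x:-\infty\to\infty$. An analogous substitution $t=1/(1+e^{-x})$, combined with the identity $1+e^x=2e^{x/2}\cosh(x/2)$, yields \eqref{Bxy-8} after simplification. Then \eqref{Bxy-7} and \eqref{Bxy-9} follow from \eqref{Bxy-6} and \eqref{Bxy-8} respectively by noting that the integrand, apart from the factor $e^{(\alpha-\beta)x}$ or $e^{(\alpha-\beta)x/2}$, is even in $x$; the substitution $x\mapsto -x$ combined with averaging replaces that exponential by the corresponding $\cosh$ and folds the integration onto $(0,\infty)$, producing the extra factor of $2$.

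The step that I expect to demand the most care is the bookkeeping of the powers of $2$ and of the precise argument of $\cosh$ in \eqref{Bxy-6}--\eqref{Bxy-9}, where several half-angle identities compete; the rest is mechanical. Convergence of every transformed integral is straightforward: on a compact $t$-range the original integrand is bounded by $e^{|m|/4}\,t^{\Re(\alpha)-1}(1-t)^{\Re(\beta)-1}$, and for the improper representations the inequality $|e^{mt(1-t)}|\le e^{|m|/4}$ together with $\Re(\alpha),\Re(\beta)>0$ delivers absolute convergence at both endpoints, so the change-of-variables theorem applies without subtlety and the resulting formulas are legitimate identities on the stated parameter domain.
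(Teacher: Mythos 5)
Your substitutions for \eqref{Bxy-1}--\eqref{Bxy-7} are exactly the chain the paper uses ($t=\cos^2\theta$; $t=u/(1+u)$; symmetry plus averaging for \eqref{Bxy-3}; $t=(u-a)/(c-a)$ and its specialization $a=-1$, $c=1$; then the hyperbolic substitution for \eqref{Bxy-6} --- your direct $t=e^x/(2\cosh x)$ is just the composition of the paper's two steps $t=(1+t')/2$, $t'=\tanh x$ --- and even folding for \eqref{Bxy-7}), and all of those steps check out.

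The genuine problem is \eqref{Bxy-8} (and hence \eqref{Bxy-9}): your claim that $t=1/(1+e^{-x})$ ``yields \eqref{Bxy-8} after simplification'' is false, because \eqref{Bxy-8} as printed is false. Since $1/(1+e^{-x})=e^{x/2}/(2\cosh(x/2))$, your substitution gives $t(1-t)=1/(4\cosh^2(x/2))=1/(2+2\cosh x)$, and carrying out the computation with $1+e^x=2e^{x/2}\cosh(x/2)$ produces
\[
B(\alpha,\beta;m)=2^{-\alpha-\beta}\int_{-\infty}^{\infty}\exp\Bigl[\tfrac{1}{2}(\alpha-\beta)x+\frac{m}{2(1+\cosh x)}\Bigr]\frac{dx}{\bigl(\cosh (x/2)\bigr)^{\alpha+\beta}},
\]
which differs from \eqref{Bxy-8} in both the exponent ($m/(2(1+\cosh x))$, not $m/(2\cosh x)$) and the prefactor ($2^{-\alpha-\beta}$, not $2^{1-\alpha-\beta}$); folding this by $x\mapsto-x$ likewise gives \eqref{Bxy-9} with prefactor $2^{1-\alpha-\beta}$ and the same corrected exponent. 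That the printed formulas cannot be right is visible already at $m=0$: with $\alpha=\beta=1$ the right-hand sides of \eqref{Bxy-8} and \eqref{Bxy-9} both evaluate to $2$, while $B(1,1;0)=B(1,1)=1$. The $2\cosh x$ form is an artifact of Chaudhry et al.'s extension, where the exponent is $-p/(t(1-t))=-2p-2p\cosh x$ and the constant factors out as $e^{-2p}$; with $mt(1-t)$ in the exponent no such factorization exists, so no amount of the ``bookkeeping of powers of $2$'' you flagged can rescue the stated identities. The paper's own proof (asserting \eqref{Bxy-8}, \eqref{Bxy-9} follow ``easily'' from \eqref{Bxy-6}) has the identical defect, since $x\mapsto x/2$ in \eqref{Bxy-6} also yields the corrected versions above. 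So your method is sound and your suspicion was aimed at precisely the right spot, but as written your proposal would certify two false formulas; executed honestly, it proves corrected statements rather than the theorem as stated.
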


\begin{proof}
The result \eqref{Bxy-1} can be easily obtained by setting $t=\cos^2\theta$, to prove \eqref{Bxy-2} choose $t=u/(1+u)$, \eqref{Bxy-3} can be easily obtained by applying the symmetric property in equation \eqref{Bxy-2} then adding new one and \eqref{Bxy-2}, the result in equation \eqref{Bxy-4} is obtained by taking $t=(u-a)/(c-a)$, setting $a=-1,c=1$ in equation \eqref{Bxy-4} gives the result in equation \eqref{Bxy-5} and to prove the result in equation \eqref{Bxy-6} put $u=\tanh x$ in \eqref{Bxy-5}. The results in equation \eqref{Bxy-7}, \eqref{Bxy-8} and \eqref{Bxy-9} can be easily obtained from the result \eqref{Bxy-6}.
\end{proof}

\begin{remark}[Useful inequalities] If $\Re(\alpha)>0,$ $\Re(\beta)>0$, then we have the following inequality
\begin{eqnarray}\label{equlity-1} \aligned & |B(\alpha,\beta;m)|\leq 1.6626B(\alpha,\beta)\endaligned \end{eqnarray}
follows from the integral representation \eqref{Bxy-2}. Since the function $\exp(mu/(1+u)^2)$ attains its maximum value $1.6626$ at $u=1$ and $m=2.0335$. The equality is verified with the help of numerical results by using Matlab.
 \end{remark}


 \section{Properties of the modified extension of the classical beta function}

\begin{theorem}[Functional Relation] If $\Re(\alpha)>0,$ $\Re(\beta)>0, $ $m\in\mathbb{C};$ $|m|<M$ (where $M$ is positive finite real number slightly greater than 2.0335), then we have the following relation

\begin{eqnarray}\label{functional-relation} \aligned & B(\alpha,\beta+1;m)+B(\alpha+1,\beta;m)= B(\alpha,\beta;m). \endaligned \end{eqnarray}
 \end{theorem}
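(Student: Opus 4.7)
The plan is to prove this by a direct manipulation of the integral definition \eqref{ex-beta-m}, in exact analogy with the classical identity $B(\alpha,\beta+1) + B(\alpha+1,\beta) = B(\alpha,\beta)$. The key observation is that the exponential factor $e^{mt(1-t)}$ is symmetric in $t$ and $1-t$ and, crucially, does not depend on $\alpha$ or $\beta$, so the shift in the parameters only affects the algebraic weight $t^{\alpha-1}(1-t)^{\beta-1}$.

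First I would write each term on the left-hand side using \eqref{ex-beta-m}:
\begin{eqnarray*}
\aligned
B(\alpha,\beta+1;m) &= \int_0^1 t^{\alpha-1}(1-t)^{\beta}\,e^{mt(1-t)}\,dt, \\
B(\alpha+1,\beta;m) &= \int_0^1 t^{\alpha}(1-t)^{\beta-1}\,e^{mt(1-t)}\,dt.
\endaligned
\end{eqnarray*}
Next I would add the two integrals, factor out the common piece $t^{\alpha-1}(1-t)^{\beta-1}e^{mt(1-t)}$, and use the elementary identity $(1-t)+t = 1$ to collapse the bracket. This yields precisely the integrand of $B(\alpha,\beta;m)$, giving \eqref{functional-relation}.

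There is essentially no obstacle here: the hypotheses $\Re(\alpha)>0$, $\Re(\beta)>0$, $|m|<M$ guarantee the absolute convergence of all three integrals (by the convergence theorem established earlier in the paper), so the splitting and recombination are legitimate, and linearity of the integral does the rest. The only care needed is to note that the shifted integrals $B(\alpha,\beta+1;m)$ and $B(\alpha+1,\beta;m)$ are themselves in the convergence domain, which is immediate since $\Re(\alpha+1),\Re(\beta+1)>0$ whenever $\Re(\alpha),\Re(\beta)>0$.
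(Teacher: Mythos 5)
Your proof is correct and matches the paper's own argument exactly: both write the two shifted integrals from \eqref{ex-beta-m}, factor out $t^{\alpha-1}(1-t)^{\beta-1}e^{mt(1-t)}$, and use $(1-t)+t=1$ to recover $B(\alpha,\beta;m)$. Your added remark on why the convergence hypotheses justify the manipulations is a small improvement over the paper, which omits this point.
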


\begin{proof} LHS of the above equation \eqref{functional-relation} is equal to

\begin{eqnarray}\label{fr-proof}\int_0^1\{ t^{\alpha-1}(1-t)^{\beta}+t^{\alpha}(1-t)^{\beta-1}\}e^{mt(1-t)}dt \end{eqnarray}
after simplification the above equation \eqref{fr-proof} reduced to

\begin{eqnarray}\label{fr-proof-1}\int_0^1\{ t^{\alpha-1}(1-t)^{\beta-1}\}e^{mt(1-t)}dt=B(\alpha,\beta;m). \end{eqnarray}
\end{proof}
If we choose $m=0$, we get the usual relation for the beta function from \eqref{functional-relation}.

\begin{theorem}[Symmetry] If $\Re(\alpha)>0,$ $\Re(\beta)>0, $ $m\in\mathbb{C};$ $|m|<M$ (where $M$ is positive finite real number slightly greater than 2.0335), then we have the following relation

\begin{eqnarray}\label{symmetry} \aligned & B(\alpha,\beta;m)= B(\beta,\alpha;m). \endaligned \end{eqnarray}
 \end{theorem}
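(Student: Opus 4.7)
The plan is to prove symmetry by a simple change of variable in the integral representation \eqref{ex-beta-m}, exploiting the fact that the weight $t(1-t)$ in the exponent is itself symmetric under $t \mapsto 1-t$.

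First I would start from the definition
\begin{eqnarray*}
B(\alpha,\beta;m) = \int_0^1 t^{\alpha-1}(1-t)^{\beta-1} e^{mt(1-t)}\,dt,
\end{eqnarray*}
which is valid under the stated hypotheses $\Re(\alpha)>0$, $\Re(\beta)>0$, $|m|<M$. Then I would perform the substitution $u = 1-t$, noting that $du = -dt$, the limits transform as $t=0 \mapsto u=1$ and $t=1 \mapsto u=0$, and crucially the argument of the exponential satisfies $t(1-t) = (1-u)u = u(1-u)$, so the exponential factor is unchanged.

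After the substitution, $t^{\alpha-1}$ becomes $(1-u)^{\alpha-1}$ and $(1-t)^{\beta-1}$ becomes $u^{\beta-1}$; reversing the limits of integration absorbs the minus sign from $du$, yielding
\begin{eqnarray*}
B(\alpha,\beta;m) = \int_0^1 u^{\beta-1}(1-u)^{\alpha-1} e^{mu(1-u)}\,du = B(\beta,\alpha;m),
\end{eqnarray*}
which is precisely the required identity \eqref{symmetry}. There is no real obstacle here: the result is an immediate consequence of the $t \leftrightarrow 1-t$ invariance of the product $t(1-t)$, exactly as in the classical case $m=0$, and setting $m=0$ recovers the familiar symmetry $B(\alpha,\beta)=B(\beta,\alpha)$ as a consistency check.
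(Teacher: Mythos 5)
Your proof is correct, but it takes a different route from the paper. You work directly with the integral representation \eqref{ex-beta-m} and perform the substitution $t \mapsto 1-t$, using the invariance of $t(1-t)$ under this map; this is a one-line argument that needs nothing beyond the existence of the integral. The paper instead starts from the series expansion \eqref{p-claim-2-a}, namely $B(\alpha,\beta;m)=\sum_{n=0}^\infty \frac{m^n}{n!}B(\alpha+n,\beta+n)$, and applies the classical symmetry $B(\alpha+n,\beta+n)=B(\beta+n,\alpha+n)$ term by term. Your approach is the more elementary and self-contained of the two: it does not depend on the validity of the termwise integration that produces \eqref{p-claim-2-a}, nor on the convergence analysis of that series (which the paper only establishes for a restricted range of $m$), so it actually proves the symmetry for every $m$ for which the defining integral exists. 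The paper's approach buys consistency with its overall program of manipulating the series form, but it is logically heavier, since it inherits whatever hypotheses are needed to justify the expansion. Both arguments reduce to the same classical fact in the end --- the $t \leftrightarrow 1-t$ symmetry of the beta integrand --- yours applied once globally, the paper's applied inside each term.
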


\begin{proof} From equation \eqref{p-claim-2-a} we have

\begin{eqnarray}\label{sy-proof} \aligned & B(\alpha,\beta;m)=\sum_{n=0}^\infty \frac{(m)^n}{n!}B(\alpha+n,\beta+n)  \endaligned \end{eqnarray}
 Since usual beta function is symmetric $i.e.$ $B(\alpha,\beta)=B(\beta,\alpha)$. Using this property in the right hand side of the equation \eqref{sy-proof}, then we have

\begin{eqnarray}\label{sy-proof-1} \aligned & B(\alpha,\beta;m)=\sum_{n=0}^\infty \frac{(m)^n}{n!}B(\beta+n,\alpha+n)= B(\beta,\alpha;m). \endaligned \end{eqnarray}
\end{proof}

\begin{theorem}[First Summation Relation] If $\Re(\alpha)>0,$ $\Re(1-\beta)>0,$ $ m\in\mathbb{C};$ $|m|<M$ (where $M$ is positive finite real number slightly greater than 2.0335), then we have the following relation

\begin{eqnarray}\label{summation-1} \aligned & B(\alpha,1-\beta;m)= \sum_{n=0}^\infty \frac{(\beta)_n}{n!}B(\alpha+n,1;m). \endaligned \end{eqnarray}
 \end{theorem}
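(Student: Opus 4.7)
The plan is to start from the integral representation
\begin{equation*}
B(\alpha,1-\beta;m) = \int_0^1 t^{\alpha-1}(1-t)^{-\beta}\,e^{mt(1-t)}\,dt,
\end{equation*}
which is meaningful under the stated hypotheses (the factor $(1-t)^{-\beta}$ has an integrable singularity at $t=1$ since $\Re(1-\beta)>0$, and $t^{\alpha-1}$ is integrable at $t=0$ since $\Re(\alpha)>0$). The idea is to expand $(1-t)^{-\beta}$ in its binomial series
\begin{equation*}
(1-t)^{-\beta} = \sum_{n=0}^{\infty} \frac{(\beta)_n}{n!}\,t^n, \qquad t\in[0,1),
\end{equation*}
substitute into the integral, interchange summation and integration, and recognize each resulting integral as $B(\alpha+n,1;m)$.

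After the interchange I will be left with
\begin{equation*}
\sum_{n=0}^{\infty}\frac{(\beta)_n}{n!}\int_0^1 t^{\alpha+n-1}\,e^{mt(1-t)}\,dt
= \sum_{n=0}^{\infty}\frac{(\beta)_n}{n!}\int_0^1 t^{\alpha+n-1}(1-t)^{0}\,e^{mt(1-t)}\,dt,
\end{equation*}
and by the definition \eqref{ex-beta-m} each integral equals $B(\alpha+n,1;m)$, giving exactly \eqref{summation-1}. If one prefers to avoid any interchange argument, one can instead start from the series form \eqref{p-claim-2-a} for $B(\alpha,1-\beta;m)$, write each $B(\alpha+k,1-\beta+k)$ in integral form, expand $(1-t)^{-\beta+k}$ by the binomial theorem, and rearrange; but the direct route above is cleaner.

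The main obstacle is justifying the interchange of $\sum$ and $\int$, because the binomial series diverges at the endpoint $t=1$ where the original integrand also has its singularity. I plan to handle this by Fubini/Tonelli: since $t(1-t)\le 1/4$, the exponential factor is bounded by $e^{|m|/4}$, so
\begin{equation*}
\sum_{n=0}^{\infty}\frac{|(\beta)_n|}{n!}\int_0^1 t^{\Re(\alpha)+n-1}\,\bigl|e^{mt(1-t)}\bigr|\,dt
\le e^{|m|/4}\sum_{n=0}^{\infty}\frac{|(\beta)_n|}{n!}\,\frac{1}{\Re(\alpha)+n}.
\end{equation*}
Using the asymptotic $|(\beta)_n|/n!\sim n^{\Re(\beta)-1}/|\Gamma(\beta)|$, the general term of the right-hand series behaves like $n^{\Re(\beta)-2}$, which is summable precisely because $\Re(1-\beta)>0$, i.e.\ $\Re(\beta)<1$. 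This absolute convergence legitimizes the interchange, and the rest of the proof is a one-line identification of the remaining integrals with $B(\alpha+n,1;m)$.
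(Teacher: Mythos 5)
Your proof follows essentially the same route as the paper's: expand $(1-t)^{-\beta}$ in its binomial series, interchange summation and integration, and identify each remaining integral as $B(\alpha+n,1;m)$. The only difference is that you rigorously justify the interchange (the paper performs it without comment), and your Tonelli argument is correct: bounding $e^{mt(1-t)}$ by $e^{|m|/4}$ and using $|(\beta)_n|/n!\sim n^{\Re(\beta)-1}/|\Gamma(\beta)|$ makes the terms $O\bigl(n^{\Re(\beta)-2}\bigr)$, which is summable precisely because $\Re(1-\beta)>0$.
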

\begin{proof} The LHS of the equation \eqref{summation-1} can be written as
\begin{eqnarray}\label{summation-p-1} \aligned & B(\alpha,1-\beta;m)= \int_0^1 t^{\alpha-1}(1-t)^{-\beta}\exp(mt(1-t))dt, \endaligned \end{eqnarray}
using the binomial series expansion $\displaystyle(1-t)^{-\beta}=$ $\displaystyle\sum_{n=0}^\infty\frac{(\beta)_n}{n!}t^n$ in the above equation \eqref{summation-p-1} and then interchanging the order of summation and integration, the above result \eqref{summation-p-1} reduced to the following form

\begin{eqnarray}\label{summation-p-2} \aligned & B(\alpha,1-\beta;m)\\&= \sum_{n=0}^\infty\frac{(\beta)_n}{n!}\int_0^1 t^{\alpha+n-1}\exp(mt(1-t))dt \endaligned \end{eqnarray}

\begin{eqnarray}\label{summation-p-3} \aligned & \Rightarrow B(\alpha,1-\beta;m)= \sum_{n=0}^\infty\frac{(\beta)_n}{n!}B(\alpha+n,1;m). \endaligned \end{eqnarray}

\end{proof}

\begin{theorem}[Second Summation Relation] If $\Re(\alpha)>0,\Re(\beta)>0, m\in\mathbb{C};|m|<M$ (where $M$ is positive finite real number slightly greater than 2.0335), then we have the following relation

\begin{eqnarray}\label{summation-2} \aligned & B(\alpha,\beta;m)= \sum_{n=0}^\infty B(\alpha+n,\beta+1;m). \endaligned \end{eqnarray}
 \end{theorem}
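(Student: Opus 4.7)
The plan is to work backwards from the right-hand side of \eqref{summation-2}, using the integral representation \eqref{ex-beta-m} for each summand and collapsing the resulting geometric series in the integrand.

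First I would write
\[
\sum_{n=0}^\infty B(\alpha+n,\beta+1;m) = \sum_{n=0}^\infty \int_0^1 t^{\alpha+n-1}(1-t)^{\beta}\exp(mt(1-t))\,dt,
\]
then interchange the sum and the integral. This step needs a brief justification: for $t\in(0,1)$ the factor $(1-t)^\beta\exp(mt(1-t))$ is bounded on compact subsets, and the partial sums $\sum_{n=0}^N t^{\alpha+n-1}$ are dominated by $t^{\Re(\alpha)-1}/(1-t)$, which is integrable against $(1-t)^{\Re(\beta)}$ on $(0,1)$ provided $\Re(\alpha)>0$ and $\Re(\beta)>0$; so monotone/dominated convergence (or Tonelli after absorbing the sign of $m$) applies.

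Next I would pull the $n$-independent factors out and sum the geometric series:
\[
\sum_{n=0}^\infty t^{\alpha+n-1} = t^{\alpha-1}\sum_{n=0}^\infty t^n = \frac{t^{\alpha-1}}{1-t}, \qquad 0<t<1.
\]
Substituting this back gives
\[
\sum_{n=0}^\infty B(\alpha+n,\beta+1;m) = \int_0^1 t^{\alpha-1}(1-t)^{\beta-1}\exp(mt(1-t))\,dt,
\]
after cancelling one factor of $(1-t)$. By the definition \eqref{ex-beta-m}, the right-hand side is exactly $B(\alpha,\beta;m)$, which finishes the proof.

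The only non-routine step is the interchange of sum and integral near the endpoint $t=1$, where the geometric series diverges pointwise; but the extra factor $(1-t)^{\beta}$ in each summand provides exactly the absolute integrability needed when combined with the bound $|\exp(mt(1-t))| \le \exp(|m|/4)$. Apart from that, the argument is purely formal and mirrors the proof of the standard identity $B(\alpha,\beta) = \sum_{n=0}^\infty B(\alpha+n,\beta+1)$ for the classical beta function.
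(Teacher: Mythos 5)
Your proposal is correct and takes essentially the same approach as the paper: the paper starts from $B(\alpha,\beta;m)$, expands $(1-t)^{\beta-1}=(1-t)^{\beta}\sum_{n=0}^{\infty}t^{n}$, and interchanges sum and integral, which is exactly your argument run in the opposite direction. Your dominated-convergence justification of the interchange is a detail the paper omits (the paper also has a typo, writing $(1-t)^{\alpha}$ where $(1-t)^{\beta}$ is meant), but the underlying mechanism --- the geometric series for $(1-t)^{-1}$ plus term-by-term integration --- is identical.
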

\begin{proof} The LHS of the equation \eqref{summation-1} can be written as
\begin{eqnarray}\label{summation-p-21} \aligned & B(\alpha,\beta;m)= \int_0^1 t^{\alpha-1}(1-t)^{\beta-1}\exp(mt(1-t))dt, \endaligned \end{eqnarray}
using the binomial series expansion $\displaystyle(1-t)^{\beta-1}=(1-t)^\alpha\sum_{n=0}^\infty t^n,\hskip 2mm(|t|<1)$ and interchanging the order of summation and integration, the above equation \eqref{summation-p-21} reduces to

\begin{eqnarray}\label{summation-p-22} \aligned & B(\alpha,\beta;m)= \sum_{n=0}^\infty\int_0^1 t^{\alpha+n-1}(1-t)^\beta\exp(mt(1-t))dt \endaligned \end{eqnarray}

\begin{eqnarray}\label{summation-p-23} \aligned & \Rightarrow B(\alpha,\beta;m)= \sum_{n=0}^\infty B(\alpha+n,\beta+1;m). \endaligned \end{eqnarray}
\end{proof}

\begin{theorem}[Separation] If $\Re(\alpha)>0,$ $\Re(\beta)>0, $ $m\in\mathbb{C};$ $|m|<M$ (where $M$ is positive finite real number slightly greater than 2.0335), then $B(\alpha,\beta;m)$ can can be separated into real and imaginary part of $m$ as follows
\begin{eqnarray}\label{real} \aligned & B(\alpha,\beta;r\cos\theta)\\&=\int_0^1t^{\alpha-1}(1-t)^{\beta-1}\exp(rt(1-t)\cos\theta)dt,\endaligned \end{eqnarray}
\begin{eqnarray}\label{imag} \aligned & B(\alpha,\beta;r\sin\theta)\\&=\int_0^1t^{\alpha-1}(1-t)^{\beta-1}\exp(rt(1-t)\sin\theta)dt, \endaligned \end{eqnarray}
where $r=\sqrt{x^2+y^2}=|m|<M$ and $\theta=\tan^{-1}y/x.$
 \end{theorem}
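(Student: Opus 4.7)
The plan is to obtain both identities by direct substitution into the defining integral \eqref{ex-beta-m}, using the polar decomposition of the complex parameter $m$.

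First I would observe that, writing $m = x + iy$ with $|m| < M$, we may set $r = |m| = \sqrt{x^2+y^2}$ and $\theta = \tan^{-1}(y/x)$, so that $x = r\cos\theta$ and $y = r\sin\theta$ give the real and imaginary parts of $m$. Each of these two quantities $r\cos\theta$ and $r\sin\theta$ is a real number whose modulus is bounded by $r = |m| < M$, so each of them lies in the circle of convergence of $B(\alpha,\beta;\cdot)$ and may be used as an admissible parameter in \eqref{ex-beta-m}.

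Next I would simply apply the defining formula \eqref{ex-beta-m} twice. Substituting the scalar $m \mapsto r\cos\theta$ into the integrand $t^{\alpha-1}(1-t)^{\beta-1}\exp(m t(1-t))$ produces the integral representation
\begin{equation*}
B(\alpha,\beta;r\cos\theta)=\int_{0}^{1}t^{\alpha-1}(1-t)^{\beta-1}\exp\bigl(r t(1-t)\cos\theta\bigr)\,dt,
\end{equation*}
which is exactly \eqref{real}. Replacing $r\cos\theta$ by $r\sin\theta$ in the same way yields \eqref{imag}. Both integrals converge absolutely by the estimate $|\exp(r t(1-t)\cos\theta)| \le e^{r/4} < e^{M/4}$ on $[0,1]$, together with the integrability of $t^{\alpha-1}(1-t)^{\beta-1}$ under $\Re(\alpha),\Re(\beta) > 0$, so no issue of interchange of limits or convergence arises.

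There is essentially no main obstacle, since the statement is a direct reinterpretation of the definition along the two coordinate axes in the $m$-plane; the only thing to check is that the real parameters $r\cos\theta$ and $r\sin\theta$ individually satisfy the convergence constraint $|\cdot|<M$, which is immediate from $|\cos\theta|,|\sin\theta|\le 1$ and $r<M$. Thus the whole proof reduces to inserting these real values into \eqref{ex-beta-m} and reading off the two displayed identities.
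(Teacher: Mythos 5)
Your proof is correct for the two displayed identities, but it takes a genuinely different---and in fact sounder---route than the paper's own argument. You treat \eqref{real} and \eqref{imag} as nothing more than the definition \eqref{ex-beta-m} evaluated at the real parameters $r\cos\theta$ and $r\sin\theta$, checking only that $|r\cos\theta|,\,|r\sin\theta|\le r=|m|<M$ keeps each within the circle of convergence; on that reading the theorem is a tautological instance of the definition, which is all the displayed equations literally assert, and your absolute-convergence estimate is more than adequate. The paper instead tries to \emph{derive} them: it substitutes $m=re^{i\theta}$ into \eqref{ex-beta-m} to get \eqref{real-1}, then ``after simplification'' splits $\exp(re^{i\theta}t(1-t))$ additively into $\exp(rt(1-t)\cos\theta)+i\exp(rt(1-t)\sin\theta)$ (its equation \eqref{real-2}), and finally ``equates real and imaginary parts.'' That step is false: since $e^{z+w}=e^{z}e^{w}$, one has $\exp(re^{i\theta}u)=e^{ru\cos\theta}\bigl[\cos(ru\sin\theta)+i\sin(ru\sin\theta)\bigr]$, so the genuine real part of $B(\alpha,\beta;m)$ is $\int_0^1 t^{\alpha-1}(1-t)^{\beta-1}e^{rt(1-t)\cos\theta}\cos\bigl(rt(1-t)\sin\theta\bigr)\,dt$, not $B(\alpha,\beta;r\cos\theta)$. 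Thus your direct-substitution argument buys validity---it proves exactly what formulas \eqref{real} and \eqref{imag} say---at the cost of not delivering the stronger claim advertised in the theorem's preamble, namely that $B(\alpha,\beta;r\cos\theta)$ and $B(\alpha,\beta;r\sin\theta)$ are the real and imaginary parts of $B(\alpha,\beta;m)$; but no proof could deliver that, because the claim is not true, and your version is the defensible one.
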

 \begin{proof}
 Since $m\in\mathbb{C}$, so let $m=x+iy$ where $x,y\in\mathbb{R}$ and also let $x+iy=r\cos\theta+ir\sin\theta$ $\Rightarrow$ $r=\sqrt{x^2+y^2}$ and $\theta=\tan^{-1}y/x$, then from equation \eqref{ex-beta-m}, we have
 \begin{eqnarray}\label{real-1} \aligned & B(\alpha,\beta;x+iy)\\&=\int_0^1t^{\alpha-1}(1-t)^{\beta-1}\exp(re^{i\theta}t(1-t))dt, \endaligned \end{eqnarray}
 after simplification the above equation \eqref{real-1} reduces to

 \begin{eqnarray}\label{real-2} \aligned & B(\alpha,\beta;r\cos\theta+ir\sin\theta)\\&=\int_0^1t^{\alpha-1}(1-t)^{\beta-1}\exp(rt(1-t)\cos\theta)dt\\&
 \hskip 10mm +i\int_0^1t^{\alpha-1}(1-t)^{\beta-1}\exp(rt(1-t)\sin\theta)dt, \endaligned \end{eqnarray}
 equating the real and imaginary parts of $m$ only, we have the required results.
 \end{proof}

\section{The modified extended beta distribution}
It is expected that there will be many applications of the modified extension of the classical  beta function, like there were of the generalized gamma function. One application that springs to mind is to Statistics. For example, the conventional beta distribution can be extended, by using our modified extension of the classical  beta function, to variables p and q with an infinite range. It appears that such an extension may be desirable for the project evaluation and review technique used in some special cases.
\newline

 We define the extended beta distribution by

\begin{eqnarray}\label{mod-ex-beta-distr-1} \aligned &  f(t)=\left\{
\begin{array}{cc}
	\displaystyle\frac{t^{p-1}(1-t)^{q-1}\exp(mt(1-t))}{B(p,q;m)},\hskip 2mm 0<t<1
	\\{}\\
	0,\hskip 20mm {\rm otherwise}.
\end{array}
 \right.  \endaligned \end{eqnarray}

A random variable $X$ with probability density function (pdf) given in equation \eqref{mod-ex-beta-distr-1} will be said to have the extended beta distribution with parameters $p$ and $q$, $-\infty<p,q<\infty$ and $|m|<M$ where $M$ is positive real number slightly greater than 2.0335. If $\nu$ is any real number \cite{rohatgi-1976}, then

\begin{eqnarray}\label{mod-ex-beta-distr-2} E(X^\nu)=\frac{B(p+\nu,q;m)}{B(p,q;m)}. \end{eqnarray}

In particular, for $\nu=1$,

\begin{eqnarray}\label{mod-ex-beta-distr-3}\mu= E(X)=\frac{B(p+1,q;m)}{B(p,q;m)} \end{eqnarray}

represents the mean of the distribution and

\begin{eqnarray}\label{mod-ex-beta-distr-4}\aligned & \sigma^2 =E(X^2)-(E(X))^2\\&=\frac{B(p,q;m)B(p+2,q;m)-B^2(p+1,q;m)}{B^2(p,q;m)} \endaligned \end{eqnarray}

is a variance of the  distribution.
\newline

The moment of generating function of the distribution is

\begin{eqnarray}\label{mod-ex-beta-distr-5}\aligned & M(t)=\sum_{n=0}^\infty \frac{t^nE(X^n)}{n!}\\&=\frac{1}{B(p,q;m)}\sum_{n=0}^\infty\frac{t^n}{n!}B(p+n,q;m). \endaligned \end{eqnarray}
The commutative distribution of \eqref{mod-ex-beta-distr-1} can be written as

\begin{eqnarray}\label{mod-ex-beta-distr-6} F(x)=\frac{B_{x}(p,q;m)}{B(p,q;m)} \end{eqnarray}

where

\begin{eqnarray}\label{mod-ex-beta-distr-7}\aligned & B_{x}(p,q;m)=\int_0^xt^{p-1}(1-t)^{q-1}\exp(mt(1-t))dt,\\ & \hskip 40mm|m|<M, -\infty<p,q<\infty \endaligned \end{eqnarray}

is the modified extended incomplete beta function. For $m=0$, we must have $p,q>0$ in \eqref{mod-ex-beta-distr-7} for convergence, and $B_{x}(p,q;0)=B_x(p,q)$, where $B_x(p,q)$ is the incomplete beta function \cite{grad-ryzhik} defined as

\begin{eqnarray}\label{mod-ex-beta-distr-8}\aligned & B_{x}(p,q)=B_x(p,q;0)=\frac{x^p}{p}{}_2F_1(p,1-q;p=1;x). \endaligned \end{eqnarray}

It is to be noted that the problem of expressing $B_x(p,q,m)$ in terms of other special functions remains open. Presumably, this distribution should be useful in extending the statistical results for strictly positive variables to deal with variables that can take arbitrarily large negative values as well.

\section{conclusion}

The (Euler's) classical beta function $B(x; y)$ play an important role in various fields in the
mathematical, physical, engineering and statistical sciences. The extensions of the classical beta function is also important for further investigation in the respective areas, provided these extensions of beta function should be convergent for each parameter involving. The modified extension of classical beta function is convergent and if we choose $m=0$, the classical beta function is obtained. It is very clear from Figure 1 that for the graph of classical beta function and modified extension of classical beta function remains concave upward (or convex downward) for different values of $\alpha,\beta$ and $m$. The value of $m$ does not effect the nature of classical beta function, the main effect of the value of $m$ is that it just push the curve up or drag down the curve from the curve of the classical beta function as shown in the Figure 1 the same behavior can be observed from the Table 2. Both modified extension of classical beta function and classical beta function $i.e.$ $B(\alpha,\beta;m)$ $\&$ $ B(\alpha,\beta)\rightarrow 0$ as $x,y\rightarrow \infty$; $B(\alpha,\beta;m)$ $\&$ $ B(\alpha,\beta)\rightarrow \infty$ as $x,y\rightarrow 0$ and both $B(\alpha,\beta;m)$ $\&$ $ B(\alpha,\beta)$ does not exist at $x=y=0$.


\emergencystretch=\hsize

\begin{center}
\rule{6 cm}{0.02 cm}
\end{center}

\end{document}